\def\p{\partial}
\def\R{\mathbb{R}}
\def\vv<#1>{\langle#1\rangle}
\def\XXint#1#2{\setbox0=\hbox{$#1{#2}{\int}$}{#2}\kern-.5\wd0 }
\def\XXint#1#2#3{{\setbox0=\hbox{$#1{#2#3}{\int}$}
     \vcenter{\hbox{$#2#3$}}\kern-.5\wd0}}
\def\vv<#1>{{\left\langle#1\right\rangle}}
\def\Vol{\mbox{Vol}}
\def\sph{\mathbb{S}}
\newtheorem{thm}{Theorem}[section]
\newtheorem{lem}{Lemma}[section]
\theoremstyle{definition}
\theoremstyle{remark}
\numberwithin{equation}{section}
\begin{document}
\title{Estimates for higher Steklov eigenvalues}

\author{Liangwei Yang}
\address{Department of Mathematics, Shantou University, Shantou, Guangdong, 515063, China}
\email{13lwyang@stu.edu.cn}
\author{Chengjie Yu$^1$}
\address{Department of Mathematics, Shantou University, Shantou, Guangdong, 515063, China}
\email{cjyu@stu.edu.cn}
\thanks{$^1$Research partially supported by a supporting project from the Department of Education of Guangdong Province with contract no. Yq2013073, the Yangfan project from Guangdong Province and NSFC 11571215.}
\renewcommand{\subjclassname}{%
  \textup{2010} Mathematics Subject Classification}
\subjclass[2010]{Primary 35P15; Secondary 58J32}
\date{}
\keywords{Differential form,Steklov eigenvalue,Hodge-Laplacian}
\begin{abstract}
In this paper, motivated by the work of Raulot and Savo, we generalize Raulot-Savo's estimate for the first Steklov eigenvalues of Euclidean domains to higher Steklov eigenvalues.
\end{abstract}
\maketitle\markboth{Yang \& Yu}{Higher Steklov Eigenvalues}
\section{Introduction}
Let $u$ be a smooth function on the boundary $\p M$ of a compact orientable Riemannian manifold $(M^n,g )$ with nonempty boundary and $\hat u$ be its harmonic extension on $M$. Then, the Dirichlet-to-Neumann map or Steklov operator maps $u$ to $\frac{\p \hat u}{\p \nu}$ where $\nu$ is the outward unit normal vector on $\p M$. The spectrum of the Dirichlet-to-Neumann map is discrete (See \cite{Ta}). They are called Steklov eigenvalues of $(M,g)$. The Steklov eigenvalues have been extensively studied. For example \cite{Br,GP1,GP2,GP3,HPS,WX,W,FS1,FS2,CEG,E} obtained interesting estimates for them. \cite{GP} makes an excellent survey for recent progresses of Steklov eigenvalues.

Recently, Raulot and Savo \cite{RS2} extended the notion of Steklov eigenvalues to differential forms. Let $\omega$ be a differential $p$-form on $\p M$. Let $\hat \omega$ be the tangential harmonic extension of $\omega$. That is to say, $\hat \omega$ satisfies:
\begin{equation}
\left\{\begin{array}{l}\Delta\hat\omega=0\\
\iota^*\hat\omega=\omega\\
i_\nu\omega=0.
\end{array}\right.
\end{equation}
Here $\iota:\p M\to M$ is the natural inclusion. Then, the Dirichlet-to-Neumann map for $p$-forms defined by Raulot-Savo \cite{RS2} maps $\omega$ to $i_\nu d\hat\omega$. It is clear that, when $p=0$, the Dirichlet-to-Neumann map defined by Raulot and Savo coincides with the classical Dirichlet-to-Neumann map for functions. It was shown in \cite{RS2} that the spectrum of the Dirichlet-to-Neumann map is also discrete. We can arrange them in ascending order(counting multiplicity) as:
$$0\leq\sigma_1^{(p)}\leq\sigma_2^{(p)}\leq\cdots\leq\sigma_k^{(p)}\leq\cdots.$$
They are called Steklov eigenvalues for $p$-forms.

In \cite{RS1,RS2,RS3}, Raulot and Savo obtained the following interesting upper bounds for Steklov eigenvalues.
\begin{thm}[Raulot-Savo \cite{RS1,RS2,RS3}]\label{thm-RS}
\begin{enumerate}
\item Let $(M^n,g)$ be a compact orientable manifold with nonempty boundary. Then,
    $$\sigma_1^{(n-1)}(M)\leq \frac{\Vol(\p M)}{\Vol(M)};$$
\item Let $\Omega$ be a bounded domain with smooth boundary in $\R^n$. Then
\begin{enumerate}
\item $$\sigma_{2}^{(0)}(\Omega)\leq \frac{1}{n}\frac{\Vol(\p \Omega)}{\Vol(\Omega)}.$$
    The equality holds if and only if $\Omega$ is a ball;
\item When $1\leq p<\frac{n}{2}$,
$$\sigma_1^{(p)}(\Omega)< \frac{p+1}{n}\frac{\Vol(\p\Omega)}{\Vol(\Omega)};$$
\item When $p\geq \frac{n}{2}$,
$$\sigma_1^{(p)}(\Omega)\leq \frac{p+1}{n}\frac{\Vol(\p\Omega)}{\Vol(\Omega)}.$$
The equality holds if and only if $\Omega$ is ball.
\end{enumerate}
\end{enumerate}
\end{thm}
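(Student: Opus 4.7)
My plan is to invoke the min-max (Rayleigh-quotient) characterization of the Steklov eigenvalues for $p$-forms and test it against explicit forms built from Euclidean coordinates. The variational principle gives
\[
\sigma_1^{(p)}(M) \leq \frac{\int_M \left(|d\alpha|^2 + |\delta\alpha|^2\right)\, dV}{\int_{\p M} |\iota^*\alpha|^2\, dA}
\]
for any smooth $p$-form $\alpha$ on $M$ with $i_\nu\alpha = 0$ on $\p M$ (subject to orthogonality against the kernel of the Dirichlet-to-Neumann map when relevant), and the analogue for $\sigma_2^{(0)}$ requires zero boundary mean for the test function.

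For (1) I would choose $\alpha = i_X\,dV_M$ with $X$ a vector field extending the outward normal $\nu$ on $\p M$; then $\iota^*\alpha = dV_{\p M}$ and $i_\nu\alpha = 0$ hold automatically, so the denominator equals $\Vol(\p M)$. Using $d(i_X dV_M) = (\mbox{div}\,X)\,dV_M$ and optimizing over $X$ (e.g.\ taking $X = \nabla f$ with $\Delta f = \Vol(\p M)/\Vol(M)$) produces the desired bound. For (2a), the classical Brock-type argument applies: test with $u_i = x_i - c_i$ where the constants $c_i$ make $u_i$ have zero mean on $\p\Omega$; since $|\nabla u_i|^2 = 1$, summing the inequalities $\sigma_2^{(0)} \int_{\p\Omega} u_i^2 \leq \int_\Omega |\nabla u_i|^2$ yields $\sigma_2^{(0)} \int_{\p\Omega} |x-c|^2 \leq n\Vol(\Omega)$. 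The lower bound $\int_{\p\Omega} |x-c|^2 \geq n^2 \Vol(\Omega)^2/\Vol(\p\Omega)$ follows from $|x-c|^2 \geq ((x-c)\cdot\nu)^2$, Cauchy--Schwarz, and the divergence identity $\int_{\p\Omega}(x-c)\cdot\nu = n\Vol(\Omega)$. Equality forces $x-c$ to be normal to $\p\Omega$ with constant length, hence $\Omega$ a ball.

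For (2b)--(2c), the analogous strategy uses test $p$-forms $\alpha_I$ derived from the parallel forms $dx_{i_1}\wedge\cdots\wedge dx_{i_p}$, corrected by a boundary term to enforce $i_\nu\alpha_I = 0$. Averaging over all $\binom{n}{p}$ multi-indices $I$ and exploiting the rotational symmetry of $\R^n$, the numerator should reduce to a multiple of $\Vol(\Omega)$ while the denominator reduces to a multiple of $\Vol(\p\Omega)$; the sharp factor $(p+1)/n$ is expected to emerge from the codimension of the tangential $p$-forms inside the ambient $p$-form bundle along $\p\Omega$. The hardest part will be this last step: pinning down the boundary correction so that $i_\nu\alpha_I = 0$ exactly while the Rayleigh numerator remains computable in closed form, and isolating the sharp constant $(p+1)/n$. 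The strict vs.\ non-strict dichotomy at $p = n/2$ I expect to trace to whether the orthogonality-to-kernel condition in the variational principle is active, while the rigidity in the Cauchy--Schwarz step characterizes the ball in the equality case.
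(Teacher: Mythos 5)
First, note that Theorem \ref{thm-RS} is quoted from Raulot--Savo and is not reproved in this paper, so your sketch has to be measured against \cite{RS1,RS2,RS3} and against the mechanism this paper abstracts in Lemma \ref{lem-comp}. Your part (2a) is correct: testing with $x_i-c_i$ of zero boundary mean, summing, using $|x-c|^2\ge ((x-c)\cdot\nu)^2$, Cauchy--Schwarz and $\int_{\p\Omega}(x-c)\cdot\nu\, dA=n\Vol(\Omega)$ is the standard proof, including the rigidity. Part (1), however, has a genuine gap: once you freeze $X=\nu$ on all of $\p M$ (which is what makes $i_\nu\alpha=0$ and $\iota^*\alpha=dV_{\p M}$ automatic), the denominator is exactly $\Vol(\p M)$, while the numerator satisfies $\int_M\big((\mbox{div}X)^2+\|dX^\flat\|^2\big)dV_M\ \ge\ \frac{1}{\Vol(M)}\big(\int_M\mbox{div}X\,dV_M\big)^2=\frac{\Vol(\p M)^2}{\Vol(M)}$ for \emph{every} admissible $X$, with equality only if $dX^\flat=0$ and $\mbox{div}X$ is constant; combined with $X=\nu$ on $\p M$ this is a Serrin-type overdetermined problem (locally $X=\nabla f$ with $f$ locally constant on $\p M$, $\p f/\p\nu\equiv 1$, $\Delta f$ constant), which has no solution on a general $M$ --- in particular your proposed optimizer does not exist, and your admissible class only yields upper bounds that are $\ge$ the claimed one. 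The repair is to prescribe only the \emph{direction} of $X$ on $\p M$: take $\Delta f=1$ in $M$, $f=0$ on $\p M$, so that $\nabla f=(\p f/\p\nu)\nu$ on $\p M$ and $\alpha=i_{\nabla f}dV_M$ still satisfies $i_\nu\alpha=0$ with numerator $\Vol(M)$; then Cauchy--Schwarz on the boundary integral, $\int_{\p M}(\p f/\p\nu)^2\ge \Vol(M)^2/\Vol(\p M)$, gives the bound.

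For (2b)--(2c), the heart of the theorem, your plan is not yet a proof and as formulated points in an unworkable direction. Starting from the parallel $p$-forms $dx_{i_1}\wedge\cdots\wedge dx_{i_p}$ and ``correcting by a boundary term'' to force $i_\nu\alpha_I=0$ means that the whole numerator $\int_\Omega(\|d\alpha_I\|^2+\|\delta\alpha_I\|^2)$ comes from the unspecified correction, whose energy depends on the geometry of $\Omega$ and is not controlled by $\Vol(\Omega)$; averaging over the $C_n^p$ multi-indices and invoking rotational symmetry cannot remove that dependence, and the orthogonality to the kernel of the Dirichlet-to-Neumann map is not addressed. The actual argument, which Lemma \ref{lem-comp} axiomatizes, is different: for each of the $C_n^{p+1}$ parallel exact $(p+1)$-forms $\xi=dx_{i_1}\wedge\cdots\wedge dx_{i_{p+1}}$ one uses the Hodge--Morrey--Friedrichs decomposition to produce a $p$-form $\omega$ with $d\omega=\xi$, $\delta\omega=0$, $i_\nu\omega=0$, chosen $L^2(\p\Omega)$-orthogonal to $\mathcal H_N^{p}(\Omega)$, and one bounds the eigenvalue by the quotient $\int_{\p\Omega}\|i_\nu d\omega\|^2dV_{\p\Omega}\big/\int_\Omega\|d\omega\|^2dV_\Omega$, justified by expanding $\omega$ in extensions of eigenforms as in \eqref{eqn-1} --- not by the Rayleigh quotient you wrote, whose boundary term $\int_{\p\Omega}\|\iota^*\omega\|^2$ is not computable here. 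Because $d\omega=\xi$ is parallel, both integrals are explicit, and summing over the multi-indices gives $\sum_I\int_{\p\Omega}\|i_\nu dx_I\|^2=C_{n-1}^p\Vol(\p\Omega)$ against $C_n^{p+1}\Vol(\Omega)$, producing $\frac{p+1}{n}=C_{n-1}^p/C_n^{p+1}$; the constant comes from $(p+1)$-indices, not from the $C_n^p$ forms you average. Finally, the strict/non-strict dichotomy at $p=n/2$ is not about the kernel constraint being active: equality forces $\Omega$ to be a ball, and for the ball $\sigma_1^{(p)}=\frac{(n+2)p}{n}$, which is strictly below $p+1=\frac{p+1}{n}\frac{\Vol(\p B)}{\Vol(B)}$ precisely when $p<\frac n2$ (compare \eqref{eqn-eigen-ball-1} and \eqref{eqn-eigen-ball-2}), so equality is impossible in that range; the rigidity analysis is carried out in \cite{RS1} and is considerably more delicate than a single Cauchy--Schwarz step.
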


Other interesting estimates of Steklov eigenvalues for differential forms are also obtained in \cite{K}. The estimate (a) is also obtain in \cite{IM} by a different method.

In this paper,motivated by the work of Raulot and Savo, we obtain the follow general estimate for Steklov eigenvalues.
\begin{thm}\label{thm-para}
Let $(M^n,g)$ be a compact orientable Riemannian manifold with nonempty boundary. Let $V$ be the space of parallel exact $1$-forms on $M$. Suppose that $m=\dim V>0$. Then
\begin{equation}\label{eqn-para}
\sigma_{k+b_{p-1}}^{(p-1)}(M)\leq \frac{C_{m-1}^{p-1}}{C_m^p+1-k}\frac{\Vol(\p M)}{\Vol(M)}
\end{equation}
for $p=1,2,\cdots,m$ and $k=1,2,\cdots,C_m^p$. Here $C_m^p=\frac{m(m-1)\cdots(m-p+1)}{p!}$ and $b_k$ denotes the $k$-th Betti number of $M$.

\end{thm}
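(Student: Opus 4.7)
The plan is to apply the variational (Courant--Fischer) characterization of Steklov eigenvalues for $(p-1)$-forms, due to Raulot--Savo, to a family of $C_m^p$ test forms constructed from $V$. First I would fix a pointwise orthonormal basis $\{df_1,\ldots,df_m\}$ of $V$, so that each $f_i$ is harmonic with $|\nabla f_i|\equiv 1$; after adding constants and rotating within $V$, arrange $\int_{\p M}f_i\,dA=0$ and $\int_{\p M}f_if_j\,dA=s_i^2\delta_{ij}$, with $s_1^2\geq\cdots\geq s_m^2$. For each $p$-subset $I=\{i_1<\cdots<i_p\}\subset[m]$, set $\alpha_I=df_{i_1}\wedge\cdots\wedge df_{i_p}$ (a parallel unit $p$-form) and
\[\omega_I=\frac{1}{p}\,\iota_R\alpha_I,\qquad R:=\sum_{j=1}^m f_j\,\nabla f_j.\]

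Routine computations give $d\omega_I=\alpha_I$ (from Cartan's formula together with $d\alpha_I=0$ and $\mathcal L_R\alpha_I=p\alpha_I$), $d^*\omega_I=0$ (each term $\pm f_{i_l}\,e_{I\setminus i_l}^\flat$ is coclosed because $\iota_{\nabla f_{i_l}}e_{I\setminus i_l}^\flat$ vanishes when $i_l\notin I\setminus i_l$), and $|\omega_I|^2=\frac{1}{p^2}\sum_{l\in I}f_l^2$. Hence $\int_M(|d\omega_I|^2+|d^*\omega_I|^2)\,dV=\Vol(M)$. A brief case analysis shows $\la\omega_I,\omega_{I'}\ra$ is nonzero (for $I\neq I'$) only when $|I\cap I'|=p-1$, and in that case it is proportional to $f_lf_{l'}$ with $l\neq l'$; these cross terms integrate to zero on $\p M$ by the choice of basis, so the $\omega_I$ are $L^2(\p M)$-orthogonal with $\int_{\p M}|\omega_I|^2\,dA=t_I^2:=\frac{1}{p^2}\sum_{l\in I}s_l^2$. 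Since $\{\alpha_I\}$ is pointwise orthonormal, any $\omega=\sum c_I\omega_I$ satisfies $\int_M(|d\omega|^2+|d^*\omega|^2)\,dV=(\sum c_I^2)\Vol(M)$, and Raulot--Savo's min-max principle then gives
\[\sigma^{(p-1)}_{k+b_{p-1}}(M)\leq\frac{\Vol(M)}{t_{(k)}^{\,2}},\]
where $t_{(k)}^{\,2}$ denotes the $k$-th largest value among $\{t_I^2\}_{I\in\binom{[m]}{p}}$.

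The analytic core is the subset-sum lower bound: for any $\mathcal J\subset\binom{[m]}{p}$ with $|\mathcal J|\geq 1$, set $n_l(\mathcal J)=|\{I\in\mathcal J:l\in I\}|$ (so $\sum_l n_l=p|\mathcal J|$ and $n_l\leq C_{m-1}^{p-1}$) and consider the weighted map $\Phi_{\mathcal J}=(\sqrt{n_l}\,f_l)_l\colon M\to\R^m$. Since $\Delta f_l=0$ and $|\nabla f_l|\equiv 1$, one has $\Delta(|\Phi_{\mathcal J}|^2/2)\equiv p|\mathcal J|$, and the divergence theorem combined with two Cauchy--Schwarz steps and $\sum_l\la\nabla f_l,\nu\ra^2\leq 1$ (projection of $\nu$ onto the parallel $m$-plane $V^*$) produces $(p|\mathcal J|)^2\Vol(M)^2\leq C_{m-1}^{p-1}\Vol(\p M)\sum_l n_ls_l^2$. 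Using the identity $p^2\sum_{I\in\mathcal J}t_I^2=\sum_l n_ls_l^2$, this rearranges to
\[\sum_{I\in\mathcal J}t_I^2\geq\frac{|\mathcal J|^{\,2}}{C_{m-1}^{p-1}}\cdot\frac{\Vol(M)^2}{\Vol(\p M)}.\]
Taking $\mathcal J$ to be the $C_m^p+1-k$ indices with smallest $t_I^2$, the maximum over $\mathcal J$ equals $t_{(k)}^{\,2}$ and is at least the average of the $t_I^2$'s over $\mathcal J$, giving $t_{(k)}^{\,2}\geq\frac{C_m^p+1-k}{C_{m-1}^{p-1}}\cdot\frac{\Vol(M)^2}{\Vol(\p M)}$, which combined with the previous display yields~(\ref{eqn-para}). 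The main technical obstacle I anticipate is justifying the use of $\omega_I$ as admissible test forms in the Raulot--Savo variational principle, since in general $i_\nu\omega_I\neq 0$ on $\p M$; I would handle this either by invoking a version of the principle where $\int_{\p M}|\omega|^2$ is taken with respect to the bulk metric (so $|\omega|^2=|\iota^*\omega|^2+|i_\nu\omega|^2$), or by passing to the tangential traces $\iota^*\omega_I$ on $\p M$ and controlling the discarded normal-part contribution via additional Stokes-type identities.
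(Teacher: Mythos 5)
The combinatorial half of your argument is sound and genuinely different from the paper's: your subset-sum bound $\sum_{I\in\mathcal J}t_I^2\ge \frac{|\mathcal J|^2}{C_{m-1}^{p-1}}\cdot\frac{\Vol(M)^2}{\Vol(\p M)}$ (via $\Delta(|\Phi_{\mathcal J}|^2/2)=p|\mathcal J|$, two Cauchy--Schwarz steps and $\sum_l\langle\nabla f_l,\nu\rangle^2\le 1$) would replace the paper's sphere-averaging Lemmas \ref{lem-int}--\ref{lem-v} and does yield the constant $\frac{C_{m-1}^{p-1}}{C_m^p+1-k}$. But the step you yourself flag as the ``main technical obstacle'' is a genuine gap, not a technicality, and neither of your proposed fixes works as stated. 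The Raulot--Savo quadratic form satisfies $\int_{\p M}\langle T\omega,\omega\rangle=\int_M(|d\hat\omega|^2+|\delta\hat\omega|^2)$ for the harmonic \emph{tangential} extension, and the energy-comparison that lets one insert a non-harmonic competitor into the Rayleigh quotient requires that competitor to satisfy $i_\nu\omega=0$ (that is the constraint class in which the harmonic tangential extension minimizes energy). Your $\omega_I=\frac1p\iota_R\alpha_I$ fails this. Fix (a) --- allowing $i_\nu\omega\neq0$ while putting the full norm $|\iota^*\omega|^2+|i_\nu\omega|^2$ in the denominator --- is not a ``version of the principle'' you can invoke: it enlarges the denominator and is therefore a strictly stronger statement than the actual variational characterization, which would itself need proof. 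Fix (b) --- passing to $\iota^*\omega_I$ --- breaks the computation twice over: you can no longer bound the energy of the harmonic tangential extension of $\iota^*\omega_I$ by $\int_M(|d\omega_I|^2+|\delta\omega_I|^2)=\Vol(M)$, since $\omega_I$ is not an admissible extension of its own trace, and the boundary norm drops from $t_I^2$ to $\int_{\p M}|\iota^*\omega_I|^2$, which you cannot compute and which moves in the wrong direction. Moreover, the index shift by $b_{p-1}$ requires the test forms to be $L^2(\p M)$-orthogonal to $\mathcal H_N^{p-1}(M)$; your normalization $\int_{\p M}f_i\,dV_{\p M}=0$ only addresses $p=1$ (connected $M$), and nothing is said for $p\ge2$.

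This admissibility issue is precisely what the paper's Lemma \ref{lem-comp} is designed to solve, and it solves it by \emph{not} using an explicit primitive: for each parallel form $\xi$ it takes an arbitrary primitive and corrects it, via the Hodge--Morrey--Friedrichs decomposition, to a coclosed primitive $\omega$ with $i_\nu\omega=0$ and $\omega\perp_{L^2(\p M)}\mathcal H_N^{p-1}(M)$ (this also produces the $b_{p-1}$ shift). Since the boundary values of the corrected primitive are no longer explicit, the paper simultaneously switches to the quotient $\int_{\p M}\|i_\nu d\omega\|^2\big/\int_M\|d\omega\|^2$, which depends only on $d\omega$, so the quantity to be estimated becomes $\int_{\p M}\|i_\nu\xi\|^2$ for parallel $\xi\in\wedge^pV$ rather than the boundary norm of a primitive. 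To salvage your route you would need either a proof of the ``full boundary norm'' min-max you hope for, or an argument that corrects $\omega_I$ to have vanishing normal part while keeping quantitative control of its boundary $L^2$-norm; as written, the proposal does not reach inequality \eqref{eqn-para}.
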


When $M$ is a domain in $\R^n$, $\dim V=n$, the theorem above give some generalized estimates with respect to Raulot-Savo's result.
\begin{thm}\label{thm-Omega}
Let $\Omega$ be a bounded domain with smooth boundary in $\R^n$. Then
\begin{equation}\label{eqn-Omega}
\sigma_{k+b_{p}}^{(p)}(\Omega)\leq \frac{C_{n-1}^{p}}{C_n^{p+1}+1-k}\frac{\Vol(\p \Omega)}{\Vol(\Omega)}
\end{equation}
for $p=0,1,\cdots,n-1$ and $k=1,2,\cdots,C_{n}^{p+1}$. Here $b_p$ denotes the $p$-th Betti number of $\Omega$. Moreover, when $k=1$ and $p\geq \frac{n}{2}$, the equality of \eqref{eqn-Omega} holds if and only if $\Omega$ is a ball.
\end{thm}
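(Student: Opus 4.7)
The inequality in Theorem \ref{thm-Omega} is a direct specialization of Theorem \ref{thm-para} to $M=\Omega\subset\R^n$. Under the Euclidean flat connection, the parallel $1$-forms on $\Omega$ are exactly the constant combinations $\sum a_i\,dx^i$, each of which is exact (being $d(\sum a_i x^i)$); conversely any parallel $1$-form has constant coefficients. Hence $V=\mathrm{span}\{dx^1,\dots,dx^n\}$ and $m=\dim V=n$. Substituting $m=n$ in \eqref{eqn-para} and relabeling $p\mapsto p+1$ converts the admissible ranges $1\leq p\leq m$, $1\leq k\leq C_m^p$ into $0\leq p\leq n-1$, $1\leq k\leq C_n^{p+1}$, which is precisely \eqref{eqn-Omega}.

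The ``if'' direction of the equality is a direct computation on the ball $B_R\subset\R^n$: all higher Betti numbers vanish ($b_p(B_R)=0$ for $p\geq 1$, $b_0=1$), while the classical spectrum gives $\sigma_1^{(p)}(B_R)=(p+1)/R$ for $p\geq 1$ and $\sigma_2^{(0)}(B_R)=1/R$, and $\Vol(\p B_R)/\Vol(B_R)=n/R$; so both sides of \eqref{eqn-Omega} at $k=1$ equal $(p+1)/R$.

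For the ``only if'' direction with $k=1$ and $p\geq n/2$, the plan is to revisit the proof of Theorem \ref{thm-para} in the Euclidean setting and extract the saturation conditions. The natural test $p$-forms are $\xi_I=\iota^*(i_X\,dx^I)$ for multi-indices $|I|=p+1$, where $X=\sum_j x^j\,\p_j$ is the position vector; they satisfy $d(i_X\,dx^I)=(p+1)\,dx^I$, which produces the factor $\frac{p+1}{n}$ in the bound after summing over the $C_n^{p+1}$ multi-indices. Equality in \eqref{eqn-Omega}, together with the Courant-Fischer trace form of the min-max, forces each $\xi_I$, after orthogonal projection off the $b_p$-dimensional kernel of the Dirichlet-to-Neumann operator, to be an eigenform at the eigenvalue $\frac{p+1}{n}\frac{\Vol(\p\Omega)}{\Vol(\Omega)}$. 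Translating the pointwise eigenvalue equation $i_\nu\,d\hat{\xi_I}=\frac{p+1}{n}\frac{\Vol(\p\Omega)}{\Vol(\Omega)}\,\xi_I$ on $\p\Omega$ back in terms of $X$ and $\nu$, and combining with the Hodge duality $\ast:\Lambda^p\to\Lambda^{n-p}$ (effective exactly when $p\geq n/2$), should produce an overdetermined system on $\p\Omega$ that forces $b_p=0$ and $\sigma_1^{(p)}(\Omega)=\frac{p+1}{n}\frac{\Vol(\p\Omega)}{\Vol(\Omega)}$. Raulot-Savo's equality case (Theorem \ref{thm-RS}(2)(c)) then concludes that $\Omega$ is a ball.

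The main obstacle is precisely this last reduction: extracting from the saturation of \eqref{eqn-Omega} enough pointwise information to rule out $b_p>0$ and to identify the geometry of $\p\Omega$. The presence of harmonic fields in the kernel of the Dirichlet-to-Neumann operator makes a direct comparison with Raulot-Savo's rigidity non-automatic; one must carry out a careful orthogonal decomposition of each test form $\xi_I$ against the kernel and verify that the kernel components do not interfere with the energy identity. Once this is controlled, Hodge duality in the range $p\geq n/2$ should furnish enough independent eigenforms to close the argument.
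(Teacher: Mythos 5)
Your derivation of the inequality \eqref{eqn-Omega} is exactly the paper's route: for a Euclidean domain the parallel exact $1$-forms are precisely the constant-coefficient forms, so $m=\dim V=n$, and substituting $m=n$ and shifting $p\mapsto p+1$ in \eqref{eqn-para} gives \eqref{eqn-Omega} with the correct ranges of $p$ and $k$. The ``if'' direction of the equality statement is also fine for the relevant range (note, though, that your formula $\sigma_1^{(p)}(B_R)=(p+1)/R$ holds only for $p\geq n/2$; for $1\leq p<n/2$ the first eigenvalue is $\frac{(n+2)p}{nR}$, by \eqref{eqn-eigen-ball-1}, which is why the theorem restricts the equality claim to $p\geq n/2$).

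The ``only if'' direction, however, is where your proposal has a genuine gap, and you say so yourself: the passage from saturation of \eqref{eqn-Omega} at $k=1$ to the conclusion that $\Omega$ is a ball is only sketched (``should produce an overdetermined system\dots'', ``the main obstacle is precisely this last reduction''). The difficulty you identify is real: since the left-hand side is $\sigma_{1+b_p}^{(p)}(\Omega)$ rather than $\sigma_1^{(p)}(\Omega)$, equality in \eqref{eqn-Omega} does not by itself give $\sigma_1^{(p)}(\Omega)=\frac{p+1}{n}\frac{\Vol(\p\Omega)}{\Vol(\Omega)}$ when $b_p>0$, so Raulot--Savo's rigidity statement (Theorem \ref{thm-RS}(2)(c)) cannot be invoked directly; one must trace equality through the test-form argument of Lemmas \ref{lem-comp} and \ref{lem-v} (equality in \eqref{eqn-1}, \eqref{eqn-2} and in Lemma \ref{lem-int}), showing in particular that the harmonic-field components orthogonal to $\mathcal H_N^{p}$ do not contribute and that the resulting coclosed primitives of the constant forms become eigenforms, before the pointwise analysis on $\p\Omega$ can be run. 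The paper's own proof does not redo this analysis either, but it closes the gap by observing that the saturation argument is word-for-word the equality analysis in Theorem 5 of \cite{RS1}, carried out there for exactly these test forms in the range $p\geq\frac{n}{2}$; your plan is essentially a reconstruction of that argument, but as written it stops short of carrying it out, so the equality case remains unproved in your proposal. Either execute the saturation analysis in detail or, as the paper does, reduce it explicitly to the equality case of \cite{RS1}.
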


In \cite{RS1}, Raulot and Savo also obtained all the Steklov values of the unit ball in $\R^n$. The first several Steklov eigenvalues are as follows:
\begin{enumerate}
\item $\sigma_1^{(0)}=0,\sigma_2^{(0)}=\cdots=\sigma_{n+1}^{(0)}=1,\cdots$;
\item When $1\leq p<\frac{n}{2}$,
\begin{equation}\label{eqn-eigen-ball-1}
\sigma_1^{(p)}=\cdots=\sigma_{C_n^{p}}^{(p)}=\frac{(n+2)p}{n},\sigma_{C_n^{p}+1}^{(p)}=\cdots=\sigma_{C_n^p+C_n^{p+1}}^{(p)}=p+1,\cdots;
\end{equation}
\item When $\frac{n}{2}\leq p\leq n-1$,
\begin{equation}\label{eqn-eigen-ball-2}
\sigma_1^{(p)}=\cdots=\sigma_{C_n^{p+1}}^{(p)}=p+1,\sigma_{C_n^{p+1}+1}^{(p)}=\cdots=\sigma_{C_n^p+C_n^{p+1}}^{(p)}=\frac{(n+2)p}{n},\cdots;
\end{equation}
\end{enumerate}

From this, we know that the estimate for higher Steklov eigenvalues in \eqref{eqn-Omega} is not sharp for unit balls. However, when $\Omega$ is a strictly convex domain in $\R^n$, we have the following sharp estimate for higher Steklov eigenvalues.
\begin{thm}\label{thm-convex}
Let $\Omega$ be a strictly convex bounded domain in $\R^n$ with smooth boundary. Then,
\begin{enumerate}
\item \begin{equation}
\sigma_{k}^{(0)}(\Omega)\leq \frac{1}{n\min_{\p\Omega} K}\cdot \frac{\Vol(\sph^{n-1})}{ \Vol(\Omega)}
\end{equation}
for $k=2,3,\cdots,n+1$. The equality holds for some $k=2,3,\cdots,n+1$ if and only if $\Omega$ is a ball; \item When $1\leq p<\frac{n}{2}$,
\begin{equation}
\sigma_k^{(p)}(\Omega)< \frac{p+1}{n\min_{\p\Omega}K}\frac{\Vol(\sph^{n-1})}{\Vol(\Omega)}
\end{equation}
for $k=1,2,\cdots,C_n^p$ and
\begin{equation}
\sigma_k^{(p)}(\Omega)\leq \frac{p+1}{n\min_{\p\Omega}K}\frac{\Vol(\sph^{n-1})}{\Vol(\Omega)}
\end{equation}
for $k=C_n^p+1,\cdots,C_n^{p+1}$. The equality holds for some $k=C_n^p+1,\cdots,C_n^{p+1}$ if and only if $\Omega$ is a ball;
\item When $\frac{n}{2}\leq p\leq n-1$,
\begin{equation}
\sigma_k^{(p)}(\Omega)\leq \frac{p+1}{n\min_{\p\Omega}K}\frac{\Vol(\sph^{n-1})}{\Vol(\Omega)}
\end{equation}
for $k=1,2,\cdots,C_n^{p+1}$. The equality holds for some $k=1,2,\cdots,C_n^{p+1}$ if and only if $\Omega$ is a ball.
\end{enumerate}
Here $K$ is the Gaussian curvature of $\p\Omega$.
\end{thm}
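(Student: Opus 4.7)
The plan is to prove Theorem~\ref{thm-convex} by the variational (min--max) characterization of Steklov eigenvalues, using explicit test functions for $p=0$ and test $p$-forms for $p\ge 1$ built from Euclidean coordinates in the spirit of Theorems~\ref{thm-para} and \ref{thm-Omega}, with $\min K$ entering through the Gauss map change of variables. Strict convexity of $\Omega$ makes the Gauss map $G:\p\Omega\to\sph^{n-1}$ a smooth diffeomorphism with Jacobian $K$, so that for any $f\ge 0$ one has $\int_{\p\Omega}f\,d\sigma=\int_{\sph^{n-1}}(f\circ G^{-1})/(K\circ G^{-1})\,d\xi\le(\min K)^{-1}\int_{\sph^{n-1}}f\circ G^{-1}\,d\xi$; convexity also forces $b_p=0$ for $p\ge 1$, so no Betti-number shift enters.

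For part~(1), translate so that $\int_{\p\Omega}(x-\br x)\,d\sigma=0$ and take $V=\mathrm{span}\{x_1-\br x_1,\dots,x_n-\br x_n\}$, a subspace of $L^2(\p\Omega)$ orthogonal to constants. For $u_v=v\cdot(x-\br x)$ with $|v|=1$ we have $\int_\Omega|\nabla u_v|^2=\Vol(\Omega)$, while applying the divergence theorem to the vector field $u_v v$ yields $\int_{\p\Omega}u_v(\nu\cdot v)\,d\sigma=\Vol(\Omega)$. Cauchy--Schwarz combined with the Gauss-map bound $\int_{\p\Omega}(\nu\cdot v)^2\,d\sigma\le\Vol(\sph^{n-1})/(n\min K)$ (using $\int_{\sph^{n-1}}(\xi\cdot v)^2\,d\xi=\Vol(\sph^{n-1})/n$) then gives $\int_{\p\Omega}u_v^2\,d\sigma\ge n\min K\cdot\Vol(\Omega)^2/\Vol(\sph^{n-1})$. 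The min--max principle bounds $\sigma_{n+1}^{(0)}(\Omega)$ by $\max_v\Vol(\Omega)/\int_{\p\Omega}u_v^2$, and the monotonicity $\sigma_k^{(0)}\le\sigma_{n+1}^{(0)}$ for $k\le n+1$ transfers this to every index in the asserted range.

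For parts~(2) and (3), take the test $p$-forms $\omega_I=i_{x-\br x}\Omega_I$ indexed by $(p+1)$-multi-indices $I=(i_1<\cdots<i_{p+1})$, where $\Omega_I=dx_{i_1}\wedge\cdots\wedge dx_{i_{p+1}}$. Cartan's formula with $\mathcal L_{x-\br x}\Omega_I=(p+1)\Omega_I$ gives $d\omega_I=(p+1)\Omega_I$, and parallelism of $\Omega_I$ gives $\delta\omega_I=0$, hence $\Delta\omega_I=0$. For unit $a\in\R^{C_n^{p+1}}$, set $\Omega_a=\sum_I a_I\Omega_I$ and $\tilde\omega_a=i_{x-\br x}\Omega_a$. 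Green's formula applied to the harmonic coclosed form $\tilde\omega_a$ produces $(p+1)\Vol(\Omega)=\int_{\p\Omega}\la\iota^*\tilde\omega_a,i_\nu\Omega_a\ra\,d\sigma$, and Cauchy--Schwarz gives $\int_{\p\Omega}|\iota^*\tilde\omega_a|^2\,d\sigma\ge(p+1)^2\Vol(\Omega)^2/\int_{\p\Omega}|i_\nu\Omega_a|^2\,d\sigma$. The combinatorial identity $\int_{\sph^{n-1}}|i_\xi\Omega_a|^2\,d\xi=(p+1)\Vol(\sph^{n-1})/n$ (proved by expanding $\Omega_a$ in the orthonormal basis $\{\Omega_I\}$, in which cross terms vanish, and using $\int_{\sph^{n-1}}\xi_i\xi_j\,d\xi=\delta_{ij}\Vol(\sph^{n-1})/n$), together with the Gauss-map inequality, yields $\int_{\p\Omega}|i_\nu\Omega_a|^2\,d\sigma\le(p+1)\Vol(\sph^{n-1})/(n\min K)$; substituting, $\int_{\p\Omega}|\iota^*\tilde\omega_a|^2\,d\sigma\ge(p+1)n\min K\cdot\Vol(\Omega)^2/\Vol(\sph^{n-1})$. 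Applying the min--max principle as in the proof of Theorem~\ref{thm-Omega} to the $C_n^{p+1}$-dimensional subspace $\mathrm{span}\{\iota^*\omega_I\}$ of boundary $p$-forms (whose nontriviality is guaranteed by the positive lower bound just obtained, and $b_p=0$) then delivers the claimed bound on $\sigma_{C_n^{p+1}}^{(p)}$, and hence on every $\sigma_k^{(p)}$ with $k\le C_n^{p+1}$.

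For the equality discussion, equality in Cauchy--Schwarz forces $\iota^*\tilde\omega_a\parallel i_\nu\Omega_a$ on $\p\Omega$ and equality in the Gauss-map step forces $K\equiv\min K$ on $\p\Omega$; by the classical rigidity that a closed convex hypersurface in $\R^n$ with constant Gauss--Kronecker curvature is a round sphere, $\Omega$ must then be a ball. The strict inequality in part~(2) for $k\le C_n^p$ follows because equality would force $\Omega$ to be a ball, on which (\ref{eqn-eigen-ball-1}) gives $\sigma_k^{(p)}=(n+2)p/n<p+1$, contradicting equality in our bound. The main obstacle I anticipate is the extension issue in the $p$-form min--max step: since $\omega_I$ is not tangential on $\p\Omega$ for general strictly convex $\Omega$, one must verify that the interior energy $\mathcal E(\tilde\omega_a)=(p+1)^2\Vol(\Omega)$ genuinely bounds the energy of the tangential harmonic extension of $\iota^*\tilde\omega_a$---either by a tangential modification of $\tilde\omega_a$ near $\p\Omega$ or by invoking the appropriate Rayleigh-type inequality underlying Theorem~\ref{thm-Omega}.
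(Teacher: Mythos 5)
Your part (1) is correct: the coordinate functions $v\cdot(x-\bar x)$, the divergence theorem, Cauchy--Schwarz and the Gauss-map bound give exactly the stated estimate for $\sigma_{n+1}^{(0)}$, and your spherical computation $\int_{\sph^{n-1}}\|i_X\Omega_a\|^2\,dV_{\sph^{n-1}}(X)=(p+1)\Vol(\sph^{n-1})/n$ is the same computation that appears in the paper's inequality \eqref{eqn-gauss}. The genuine gap is in the min--max step of parts (2)--(3), precisely the point you flagged and did not close. The eigenvalues $\sigma_k^{(p)}$ of the Raulot--Savo operator are characterized variationally through its quadratic form $Q(\varphi)=\int_{\p\Omega}\langle i_\nu d\hat\varphi,\varphi\rangle\,dV_{\p\Omega}=\int_\Omega\bigl(\|d\hat\varphi\|^2+\|\delta\hat\varphi\|^2\bigr)dV$, where $\hat\varphi$ is the \emph{tangential} harmonic extension ($i_\nu\hat\varphi=0$). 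That extension minimizes energy only among extensions with vanishing normal part: for a competitor $\omega$ with $\iota^*\omega=\varphi$ but $i_\nu\omega\neq0$, the boundary cross term in Green's formula, involving $\iota^*\delta\hat\varphi$ and $i_\nu(\omega-\hat\varphi)$, does not vanish, so nothing forces $Q(\iota^*\tilde\omega_a)\leq\int_\Omega\bigl(\|d\tilde\omega_a\|^2+\|\delta\tilde\omega_a\|^2\bigr)dV=(p+1)^2\Vol(\Omega)$. Since $\tilde\omega_a=i_{x-\bar x}\Omega_a$ is genuinely non-tangential on a general strictly convex boundary, the claimed bound on $\sigma_{C_n^{p+1}}^{(p)}$ does not follow, and neither of your suggested fixes (a tangential modification near $\p\Omega$, whose energy cost you do not control, or ``the Rayleigh-type inequality underlying Theorem \ref{thm-Omega}'', which is exactly the missing ingredient) is carried out.

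The paper closes this gap by a different mechanism, Lemma \ref{lem-comp}: instead of prescribing boundary data and extending inward, it uses the Hodge--Morrey--Friedrichs decomposition to produce, for each parallel $(p+1)$-form $\xi$, a primitive $\omega$ with $d\omega=\xi$, $\delta\omega=0$, $\Delta\omega=0$, $i_\nu\omega=0$ and $\omega\perp_{L^2(\p\Omega)}\mathcal H_N^{p}(\Omega)$; such forms are legitimate competitors, and the min--max is run on the quotient $\int_{\p\Omega}\|i_\nu d\omega\|^2\,dV_{\p\Omega}\big/\int_\Omega\|d\omega\|^2\,dV$, whose boundary term depends only on the parallel form $d\omega$, so no control of $\iota^*\omega$ on $\p\Omega$ is ever needed (this construction also produces the Betti-number shift, which for convex $\Omega$ matters only at $p=0$). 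Convexity enters afterwards, in estimating the eigenvalues $v_k$ of the form $\xi\mapsto\int_{\p\Omega}\|i_\nu\xi\|^2dV_{\p\Omega}$ on $\wedge^{p+1}\R^n$ by averaging over the unit sphere of a suitable subspace and applying the Gauss-map/co-area bound with $\min_{\p\Omega}K$ --- which is where your spherical integral reappears, and which yields the bound for every index $k\leq C_n^{p+1}$ at once. To repair your route, replace the test forms $i_{x-\bar x}\Omega_a$ by these tangential, coclosed, harmonic primitives (or simply invoke Lemma \ref{lem-comp} together with the $\min K$ refinement of Lemma \ref{lem-v}); your equality discussion, via $K\equiv\min_{\p\Omega}K$, the rigidity of constant Gauss--Kronecker curvature, and the ball spectrum \eqref{eqn-eigen-ball-1}--\eqref{eqn-eigen-ball-2}, then agrees with the paper's.
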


The remaining part of the paper is organized as follows. In section 2, we prove the general estimate \eqref{eqn-para}. In section 3, we prove Theorem \ref{thm-convex}.
\section{A general estimate}
In this section, we prove Theorem \ref{thm-para}. The following lemma is the key step to the proof our estimate which is motivated by Raulot-Savo \cite{RS1}.
\begin{lem}\label{lem-comp}
Let $(M^n,g)$ be a compact orientable Riemannian manifold with nonempty boundary and $V$ be a linear subspace of the space of exact parallel $p$-forms on $M$ for $p\geq 1$. Suppose that $\dim V=m>0$. Then
\begin{equation}
\sigma_{k+b_{p-1}}^{(p-1)}(M)\leq v_k
\end{equation}
for $k=1,2,\cdots,m$. Here $v_1\leq v_2\leq\cdots\leq v_m$ are the eigenvalues of the linear transformation $A$ on $V$ with
\begin{equation}
\int_M\vv<A\xi,\eta>dV_M=\int_{\p M}\vv<i_\nu\xi,i_\nu\eta>dV_{\p M}
\end{equation}
for any $\xi$ and $\eta$ in $V$.
\end{lem}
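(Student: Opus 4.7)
The plan is to apply the min-max characterization of Steklov eigenvalues: $\sigma_j^{(p-1)}(M)$ equals the infimum of $\max_{\omega \in E} R(\omega)$ over $j$-dimensional subspaces $E \subset \Omega^{p-1}(\p M)$, where $R(\omega) = \int_{\p M}\langle T^{(p-1)}\omega, \omega\rangle dV_{\p M} / \|\omega\|_{L^2(\p M)}^2$. We will exhibit a $(k+b_{p-1})$-dimensional subspace $E$ on which $R \leq v_k$. The first $b_{p-1}$ dimensions come from the $T$-kernel $H_0$, which has dimension $b_{p-1}$ by absolute Hodge theory (traces of harmonic fields satisfying $i_\nu = 0$), and the remaining $k$ dimensions come from the image of $V_k := \mathrm{span}(\xi_1,\ldots,\xi_k)$ under a natural map $\Phi : V \to \Omega^{p-1}(\p M)$ defined below, where $\xi_1,\ldots,\xi_m$ is an $L^2(M)$-orthonormal basis of $V$ diagonalizing $A$ with $A\xi_i = v_i \xi_i$.

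For each $\xi \in V$, the Hodge--Morrey decomposition for compact manifolds with boundary furnishes a $(p-1)$-form $\alpha_\xi$ satisfying $d\alpha_\xi = \xi$, $\delta\alpha_\xi = 0$, and $i_\nu\alpha_\xi = 0$ on $\p M$. Because $\xi$ is parallel, hence coclosed, one has $\Delta\alpha_\xi = \delta d\alpha_\xi + d\delta\alpha_\xi = \delta\xi = 0$, so $\alpha_\xi$ is already the tangential harmonic extension of its trace $\omega_\xi := \iota^*\alpha_\xi$; in particular $T\omega_\xi = i_\nu d\alpha_\xi = i_\nu\xi$. Green's formula then gives the central identity
\[
\int_M |\xi|^2 \, dV_M \;=\; \int_M \langle d\alpha_\xi, \xi \rangle \, dV_M \;=\; \int_{\p M} \langle \omega_\xi, i_\nu\xi \rangle \, dV_{\p M}.
\]
Two immediate consequences follow. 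First, the linear map $\Phi : V \to \Omega^{p-1}(\p M)$, $\xi \mapsto \omega_\xi$, is injective: if $\omega_\xi = 0$ the right-hand side vanishes, forcing $\xi = 0$. Second, $\Phi(V)\cap H_0 = \{0\}$: if $\omega_\xi \in H_0$ then $i_\nu\xi = T\omega_\xi = 0$, and the same identity again forces $\xi = 0$. Therefore $E := \Phi(V_k) + H_0$ has dimension exactly $k+b_{p-1}$.

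To bound $R$ on $E$, write any $\omega \in E$ uniquely as $\omega = \omega_\xi + \eta$ with $\xi \in V_k$ and $\eta \in H_0$. Because $T$ is self-adjoint, $i_\nu\xi = T\omega_\xi$ lies in the orthogonal complement of $\ker T = H_0$, so
\[
\int_{\p M} \langle T\omega, \omega \rangle \, dV_{\p M} \;=\; \int_{\p M} \langle i_\nu\xi, \omega_\xi \rangle \, dV_{\p M} \;=\; \int_M |\xi|^2 \, dV_M.
\]
Moreover $\|\omega\|_{L^2(\p M)}^2 \geq \|P_{H_0^\perp}\omega_\xi\|^2$ since $\eta \in H_0$, and applying the Cauchy--Schwarz inequality to the central identity (using $i_\nu\xi \perp H_0$ to replace $\omega_\xi$ by its projection onto $H_0^\perp$) gives
\[
\|P_{H_0^\perp}\omega_\xi\|^2 \;\geq\; \left(\int_M |\xi|^2\right)^2 \Big/ \|i_\nu\xi\|_{L^2(\p M)}^2.
\]
Combining with $\langle A\xi, \xi\rangle_{L^2(M)} = \|i_\nu\xi\|_{L^2(\p M)}^2$ and the spectral bound $\langle A\xi,\xi\rangle \leq v_k\|\xi\|_{L^2(M)}^2$ for $\xi \in V_k$ yields $R(\omega) \leq v_k$, and the min-max principle closes the argument. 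The main technical obstacle is the first step --- the existence of the form $\alpha_\xi$ with all three prescribed properties; once this is in hand, the remainder is a carefully orchestrated chain of integration by parts, Cauchy--Schwarz, and the orthogonality between the range and kernel of a self-adjoint operator.
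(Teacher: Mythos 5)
Your proposal is correct, but the second half of your argument takes a genuinely different route from the paper. Both proofs share the same starting point: using the Hodge--Morrey(--Friedrichs) decomposition to produce, for each exact parallel $\xi$, a coclosed primitive $\alpha$ with $d\alpha=\xi$, $\delta\alpha=0$, $i_\nu\alpha=0$, which is then automatically a tangential harmonic extension since $\delta\xi=0$ (the paper writes this out: $\xi=d\theta$, $\theta=d\alpha+\delta\beta+\gamma$ with $i_\nu\beta=0$, and $\delta\beta$ is the desired primitive). From there the paper proceeds differently: it corrects each $\omega_i$ by an element $\tilde\gamma_i\in\mathcal H_N^{p-1}(M)$ so that the boundary traces are $L^2(\partial M)$-orthogonal to $\mathcal H_N^{p-1}(M)$, and then compares the quotient $\int_{\partial M}\|i_\nu d\omega\|^2\,dV_{\partial M}\big/\int_M\|d\omega\|^2\,dV_M$ on a nonzero element of $E_k\cap\overline{\mathrm{span}\{\hat\alpha_k,\hat\alpha_{k+1},\dots\}}$ (an intersection-with-high-eigenspace argument, expanding in the positive DtN eigenforms), getting $\sigma_{k+b_{p-1}}^{(p-1)}\le$ quotient $\le v_k$. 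You instead use the standard min--max for the DtN form $\int_{\partial M}\langle T\omega,\omega\rangle\big/\|\omega\|^2_{L^2(\partial M)}$ on the explicit $(k+b_{p-1})$-dimensional test space $\Phi(V_k)+\ker T$, handling the kernel directly and lower-bounding the boundary norm by Cauchy--Schwarz applied to the identity $\int_M\|\xi\|^2=\int_{\partial M}\langle\omega_\xi,i_\nu\xi\rangle$. Your route avoids the paper's $\tilde\gamma_i$ correction and the eigenfunction expansion, at the price of invoking $\dim\ker T^{(p-1)}\ge b_{p-1}$ (traces of $\mathcal H_N^{p-1}(M)$, as in Raulot--Savo) and the Cauchy--Schwarz step; the paper's route never needs the kernel explicitly because it works only with the positive part of the spectrum. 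Two small points you should nail down: $\Phi$ is not canonically defined (the primitive is unique only up to $\mathcal H_N^{p-1}(M)$), so fix it by choosing primitives of the eigenbasis $\xi_1,\dots,\xi_m$ and extending linearly; and in the final bound treat separately the case $\xi=0$, where $\omega\in\ker T\setminus\{0\}$ and $R(\omega)=0\le v_k$ since $v_k\ge 0$.
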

\begin{proof}
Let $\xi_1,\xi_2,\cdots,\xi_m$ be the eigenforms of $v_1,v_2,\cdots,v_m$ respectively. It is clear that we can assume that
\begin{equation}
\int_M\vv<\xi_i,\xi_j>dV_M=0
\end{equation}
for $i\neq j$. Then
\begin{equation}
\int_{\p M}\vv<i_\nu\xi_i,i_\nu\xi_j>dV_{\p M}=\int_M\vv<A\xi_i,\xi_j>dV_M=v_i\int_M\vv<\xi_i,\xi_j>dV_M=0
\end{equation}
when $i\neq j$. Let $\theta_i\in A^{p-1}(M)$ be such that
$$d\theta_i=\xi_i.$$
By the Hodge decomposition for compact Riemannian manifolds with nonempty boundary (See \cite{S}), we have
\begin{equation}
\theta_i=d\alpha_i+\delta\beta_i+\gamma_i
\end{equation}
where $\beta_i\in A^{p}(M)$ with $i_\nu\beta_i=0$ and $\gamma_i\in \mathcal H_N^{p-1}(M)$. Here
\begin{equation}
\mathcal H_N^{r}(M)=\{\gamma\in A^r(M)\ |\ d\gamma=\delta\gamma=0\ \mbox{and }i_\nu\gamma=0\}.
\end{equation}
Choose $\tilde \gamma_i\in\mathcal H_N^{p-1}(M)$, such that
\begin{equation}
\omega_i=\delta\beta_i+\tilde \gamma_i \perp_{L^2(\p M)}\mathcal H_N^{p-1}(M).
\end{equation}
Then $d\omega_i=\xi_i$, $i_\nu\omega_i=0$ and $\delta\omega_i=0$. Moreover, since $\xi_i$ is parallel, $\delta\xi_i=0$. So,
\begin{equation}
\Delta\omega_i=d\delta\omega_i+\delta d\omega_i=0.
\end{equation}

Let
$$E_k= \mbox{span}\{\omega_1,\omega_2,\cdots,\omega_k\}$$
for $k=1,2,\cdots,m$. Then $\dim E_k=k$ and $E_k\perp_{L^2(\p M)}\mathcal H_N^{p-1}(M)$. Let  $\alpha_1,\alpha_2,\cdots,\alpha_k,\dots$ be a complete orthonormal system of eigenforms for positive eigenvalues for the Dirichlet-to-Neumann map on $A^{p-1}(\p M)$. By dimension reasons, we know that
\begin{equation}
E_k\cap \overline{\mbox{span}\{\hat\alpha_k,\hat\alpha_{k+1},\cdots\}}\neq 0.
\end{equation}
Let $\omega\in E_k\cap \overline{\mbox{span}\{\hat\alpha_k,\hat\alpha_{k+1},\cdots\}}$ with $\omega\neq 0$.

Suppose that $\omega=\sum_{i=k}^\infty c_i\hat \alpha_i$, then
\begin{equation}\label{eqn-1}
\begin{split}
\frac{\int_{\p M}\|i_\nu d\omega\|^2 dV_{\p M}}{\int_M \|d\omega\|^2dV_M}\geq &\frac{\int_{\p M}\|i_\nu d\omega\|^2 dV_{\p M}}{\int_M (\|d\omega\|^2+\|\delta\omega\|^2)dV_M}\\
=&\frac{\sum_{i=k}^\infty {\sigma_{i+b_{p-1}}^{(p-1)}}^2c_i^2}{\sum_{i=k}^\infty {\sigma_{i+b_{p-1}}^{(p-1)}}c_i^2}\\
\geq& \sigma_{k+b_{p-1}}^{(p-1)}.
\end{split}
\end{equation}

On the other hand, suppose that $\omega=\sum_{i=1}^ka_i\omega_i$. Then
\begin{equation}\label{eqn-2}
\begin{split}
\frac{\int_{\p M}\|i_\nu d\omega\|^2 dV_{\p M}}{\int_M\|d\omega\|^2dV_M}=\frac{\sum_{i=1}^kv_ia_i^2\int_M\|\xi_i\|^2dV_M}{\sum_{i=1}^ka_i^2\int_M\|\xi_i\|^2dV_M}\leq v_k.
\end{split}
\end{equation}
Combining \eqref{eqn-1} and \eqref{eqn-2}, we obtain the conclusion.
\end{proof}
The following lemma will be needed to obtain estimates of $v_k$.
\begin{lem}\label{lem-int}
Let $H$ be a linear subspace of $\R^n$ of dimension $m$. Let $f(x)=a_1x_1+\cdots+a_nx_n$ be a linear function on $\R^n$. Then
\begin{equation}
\int_{\sph^{n-1}\cap H}f^2(x)dV_{\sph^{n-1}\cap H}(x)\leq\frac{\Vol(\sph^{m-1})\sum_{i=1}^na_i^2}{m}.
\end{equation}
The equality holds if and only if $(a_1,a_2,\cdots,a_n)\in H$.
\end{lem}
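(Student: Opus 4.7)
The plan is to reduce the integral over the codimension--$(n-m)$ sphere $\sph^{n-1}\cap H$ to a Euclidean inner product involving only the component of $a=(a_1,\ldots,a_n)$ lying in $H$, and then invoke rotational invariance on the small sphere.

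First I would orthogonally decompose $a=a^\parallel+a^\perp$ with $a^\parallel\in H$ and $a^\perp\in H^\perp$. For any $x\in H$, the inner product $\vv<a^\perp,x>$ vanishes, so the restriction of $f$ to $H$ satisfies $f(x)=\vv<a,x>=\vv<a^\parallel,x>$. Hence the integrand depends only on the component of $a$ in $H$, and
\begin{equation*}
\int_{\sph^{n-1}\cap H}f^2(x)\,dV_{\sph^{n-1}\cap H}(x)=\int_{\sph^{n-1}\cap H}\vv<a^\parallel,x>^2\,dV_{\sph^{n-1}\cap H}(x).
\end{equation*}

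Next, noting that $\sph^{n-1}\cap H$ is isometric to the unit sphere $\sph^{m-1}$ inside $H$, I would exploit the orthogonal symmetry of this sphere. The standard identity
\begin{equation*}
\int_{\sph^{m-1}}\vv<v,x>^2\,dV_{\sph^{m-1}}=\frac{\Vol(\sph^{m-1})}{m}|v|^2
\end{equation*}
for any fixed $v\in H$ (which follows by choosing an orthonormal basis of $H$, applying rotational invariance to conclude $\int x_i^2=\int x_j^2$, and using $\sum_{i=1}^mx_i^2=1$ on $\sph^{m-1}$) yields
\begin{equation*}
\int_{\sph^{n-1}\cap H}f^2\,dV_{\sph^{n-1}\cap H}=\frac{\Vol(\sph^{m-1})}{m}|a^\parallel|^2.
\end{equation*}

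Finally, since $|a^\parallel|^2\le |a|^2=\sum_{i=1}^na_i^2$, the inequality follows, and equality holds iff $a^\perp=0$, i.e.\ iff $a\in H$. There is no genuine obstacle here; the only care needed is to cleanly identify $\sph^{n-1}\cap H$ with $\sph^{m-1}\subset H$ so that the symmetry argument applies.
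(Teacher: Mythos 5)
Your proof is correct and is essentially the paper's argument in coordinate-free language: the paper simply rotates coordinates so that $H=\{x_{m+1}=\cdots=x_n=0\}$, which is the same as your decomposition $a=a^{\parallel}+a^{\perp}$, and then uses the identity $\int_{\sph^{m-1}}x_k^2=\Vol(\sph^{m-1})/m$ exactly as you do, with the same equality analysis. No substantive difference and no gaps.
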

\begin{proof}
Without loss of generality, we can assume that
$$H=\{(x_1,x_2,\cdots,x_n)\ |\ x_{m+1}=x_{m+2}=\cdots=x_n=0\}.$$
Then,
\begin{equation}
\begin{split}
\int_{\sph^{n-1}\cap H}f^2(x)dV_{\sph^{n-1}\cap H}(x)=&\sum_{k=1}^m a_k^2\int_{\sph^{m-1}}x_k^2dx\\
=&\frac{\sum_{k=1}^m a_k^2\Vol(\sph^{m-1})}{m}\\
\leq&\frac{\Vol(\sph^{m-1})\sum_{k=1}^n a_k^2}{m}.\\
\end{split}
\end{equation}
Equality holds only when $a_{m+1}=a_{m+2}=\cdots=a_n=0$. This completes the proof.
\end{proof}
By Lemma \ref{lem-comp}, if we can estimate of the eigenvalues $v_k$, then we get estimate for Steklov eigenvalues. So, we come to estimate $v_k$.
\begin{lem}\label{lem-v}
Let $(M^n,g)$ be a compact orientable Riemannian manifold with nonempty boundary. Let $V$ be the space of parallel exact 1-forms on $M$. Suppose that $\dim V=m>0$. Then,
\begin{equation}
v_k^{(p)}\leq \frac{C_{m-1}^{p-1}}{C_m^p+1-k}\frac{\Vol(\p M)}{\Vol(M)}
\end{equation}
for $k=1,2,\cdots,C_m^p$ and $p=1,2,\cdots,n$. Here $v_1^{(p)}\leq v_2^{(p)}\leq \cdots\leq  v_{C_m^p}^{(p)}$ is the eigenvalues of the linear transformation $A^{(p)}$ on $\wedge^p V$ with
\begin{equation}
\int_M\vv<A^{(p)}\xi,\eta>dV_M=\int_{\p M}\vv<i_\nu\xi,i_\nu\eta>dV_{\p M}
\end{equation}
for any $\xi,\eta\in \wedge^pV$.
\end{lem}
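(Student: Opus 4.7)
The plan is to estimate $v_k^{(p)}$ by first bounding $\tr A^{(p)}$ and then using the ordering $v_1^{(p)}\leq\cdots\leq v_{C_m^p}^{(p)}$ to extract a bound on $v_k^{(p)}$. The defining identity makes $A^{(p)}$ self-adjoint and non-negative with respect to the $L^2$ inner product on $\wedge^p V$, so the $v_k^{(p)}$ are non-negative reals summing to $\tr A^{(p)}$. To compute this trace explicitly, I would choose an orthonormal basis $e_1,\ldots,e_m$ of $V$ consisting of parallel $1$-forms; this is possible because the pointwise inner product of parallel forms is constant, so Gram--Schmidt preserves parallelism. The wedge products $e_I:=e_{i_1}\wedge\cdots\wedge e_{i_p}$ indexed by subsets $I=\{i_1<\cdots<i_p\}\subset\{1,\ldots,m\}$ then form a pointwise orthonormal, and hence $L^2(M)$-orthogonal, basis of $\wedge^p V$ with $\|e_I\|_{L^2(M)}^2=\Vol(M)$, giving
\[
\tr A^{(p)}=\frac{1}{\Vol(M)}\sum_I\int_{\p M}\|i_\nu e_I\|^2\,dV_{\p M}.
\]

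The main calculation is then the pointwise identity for $\sum_I\|i_\nu e_I\|^2$. Writing $\nu_i:=e_i(\nu)$ at a boundary point, the interior product rule
$i_\nu e_I=\sum_{k=1}^p(-1)^{k-1}\nu_{i_k}e_{I\setminus\{i_k\}}$
combined with pointwise orthonormality of the truncated wedges yields $\|i_\nu e_I\|^2=\sum_{i\in I}\nu_i^2$. Swapping the order of summation and noting that each index $i$ lies in exactly $C_{m-1}^{p-1}$ subsets $I$ of size $p$, I would obtain
\[
\sum_I\|i_\nu e_I\|^2=C_{m-1}^{p-1}\sum_{i=1}^m\nu_i^2\leq C_{m-1}^{p-1},
\]
where the last inequality holds because $\{e_i^\sharp\}$ is orthonormal in $TM$ and $\nu$ is a unit vector. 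Integrating over $\p M$ gives $\tr A^{(p)}\leq C_{m-1}^{p-1}\Vol(\p M)/\Vol(M)$.

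Finally, since $v_k^{(p)}\leq v_j^{(p)}$ for $j\geq k$,
\[
(C_m^p+1-k)\,v_k^{(p)}\leq\sum_{j=k}^{C_m^p}v_j^{(p)}\leq\tr A^{(p)}\leq C_{m-1}^{p-1}\frac{\Vol(\p M)}{\Vol(M)},
\]
and dividing by $C_m^p+1-k$ yields the claimed inequality. The substantive content is really the combinatorial identity $\sum_I\|i_\nu e_I\|^2=C_{m-1}^{p-1}\sum_i\nu_i^2$ together with the Bessel-type bound $\sum_i\nu_i^2\leq 1$; once these are in hand, passing from the trace bound to the individual eigenvalues $v_k^{(p)}$ is a standard pigeonholing of the non-decreasing sequence against its sum.
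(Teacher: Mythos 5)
Your argument is correct and delivers exactly the stated bound, but it reaches it by a different mechanism than the paper. The paper argues variationally: it takes the orthogonal complement $H$ of the first $k-1$ eigenforms of $A^{(p)}$, notes $v_k\Vol(M)\le\int_{\p M}\|i_\nu\xi\|^2dV_{\p M}$ for every unit $\xi\in H$, and then integrates this inequality over the sphere $S\cap H$, using the auxiliary Lemma \ref{lem-int} (the exact second moment of a linear function over the sphere of a subspace) together with the reduction to $\nu^\top=\nu_1e_1$, $|\nu_1|\le 1$, to extract the factor $C_{m-1}^{p-1}/(C_m^p+1-k)$. You instead bound the full trace, $\tr A^{(p)}\le C_{m-1}^{p-1}\Vol(\p M)/\Vol(M)$, via the pointwise identity $\sum_I\|i_\nu e_I\|^2=C_{m-1}^{p-1}\sum_i\nu_i^2\le C_{m-1}^{p-1}$, and then pigeonhole the nonnegative, ordered eigenvalues, $(C_m^p+1-k)v_k\le\sum_{j\ge k}v_j\le\tr A^{(p)}$. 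The two routes are closely related --- since $H$ is $A^{(p)}$-invariant, the paper's spherical average is essentially the partial trace $\frac{1}{C_m^p+1-k}\sum_{j\ge k}v_j$, and both proofs ultimately rest on the same Bessel-type bound $\sum_i\nu_i^2\le 1$ --- but yours is more elementary: it needs no integration over coefficient spheres and no Lemma \ref{lem-int}, only linear algebra after the one pointwise computation. What the paper's setup buys is a template: the integration of $\|i_\nu\xi\|^2$ over $S\cap H$ combined with a pointwise treatment of $\nu$ is exactly the skeleton reused in Section 3, where the Gauss map and coarea formula replace the crude bound $\sum_i\nu_i^2\le1$ to get the curvature-weighted, $k$-uniform estimates of Theorem \ref{thm-convex}; a global trace-plus-pigeonhole bound would degrade for large $k$ there. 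Two minor points to make explicit if you write this up: nonnegativity of all $v_j$ (which you do note, and which the pigeonhole step genuinely needs), and connectedness of $M$ when you assert that pointwise inner products of parallel forms are constant so that Gram--Schmidt can be run with constant coefficients (the paper makes the same implicit assumption in choosing its pointwise orthonormal basis of $V$).
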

\begin{proof}
Let $\xi_1^{(p)},\cdots,\xi_{k-1}^{(p)}$ be the eigenforms for $v_1^{(p)},\cdots,v_{k-1}^{(p)}$ respectively that are orthogonal to each other. Let $\omega_1,\omega_2,\cdots,\omega_m$ be an orthonormal basis for $V$. That is to say,
\begin{equation}
\vv<\omega_i,\omega_j>=\delta_{ij}.
\end{equation}
Let $e_1,e_2,\cdots,e_m$ be the dual of $\omega_1,\omega_2,\cdots,\omega_m$ respectively. Let $H\subset \wedge^pV$ be the space of forms that are orthogonal to $\xi_1^{(p)},\xi_2^{(p)},\cdots,\xi_{k-1}^{(p)}$, and
\begin{equation}
S=\left\{\sum_{1\leq i_1<i_2<\cdots<i_p\leq m}a_{i_1i_2\cdots i_p}\omega_{i_1}
\wedge\omega_{i_2}\cdots\wedge\omega_{i_p}\bigg|\sum_{1\leq i_1<i_2<\cdots<i_p\leq m}a_{i_1i_2\cdots i_p}^2=1\right\}.
\end{equation}
Then, for any $\xi\in S\cap H$, we have
\begin{equation}
v_k \Vol(M)=v_k\int_M\|\xi\|^2dV_M\leq \int_{\p M}\|i_\nu \xi\|^2dV_{\p M}.
\end{equation}
Integrating the last inequality against $\xi$ over $S\cap H$, we have
\begin{equation}\label{eqn-int-1}
\begin{split}
v_k\Vol(M)\Vol(\sph^{C_m^p-k})\leq \int_{\p M}\int_{S\cap H}\|i_\nu \xi\|^2d\xi dV_{\p M}=\int_{\p M}\int_{S\cap H}\|i_{\nu^{\top}} \xi\|^2d\xi dV_{\p M}\\
\end{split}
\end{equation}
where $\nu^\top$ is the orthogonal projection of $\nu$ onto $\mbox{span}\{e_1,e_2,\cdots,e_m\}$.

Without loss of generality, we can suppose that  $\nu^\top=\nu_1e_1$ with $|\nu_1|\leq 1$. Then
\begin{equation}\label{eqn-int-2}
\begin{split}
\int_{S\cap H}\|i_{\nu^{\top}} \xi\|^2d\xi=&\nu_1^2\int_{S\cap H}\|i_{e_1} \xi\|^2d\xi\\
=&\nu_1^2\sum_{2\leq i_2<i_3<\cdots<i_p\leq m}\int_{S\cap H}a_{1i_2\cdots i_p}^2da\\
\leq&\frac{C_{m-1}^{p-1}\Vol(\sph^{C_m^p-k})}{C_m^p-k+1}
\end{split}
\end{equation}
by Lemma \ref{lem-int}. Substituting \eqref{eqn-int-2} into \eqref{eqn-int-1}, we obtain the conclusion.
\end{proof}
Now, combining Lemma \ref{lem-comp} and Lemma \ref{lem-v}, we obtain Theorem \ref{thm-para}. The inequality \eqref{eqn-Omega} in Theorem \ref{thm-Omega} is a direct corollary of Theorem \ref{thm-para}. The proof of the equality case in Theorem \ref{thm-Omega} is just the same as the proof the equality case in Theorem 5 of \cite{RS1}.

\section{Eigenvalue estimates on convex domains in $\R^n$}
In this section, we come to prove Theorem \ref{thm-convex}. The argument is just a simple use of coarea formula to estimate 
the integral in \eqref{eqn-int-1}.

\begin{proof}[Proof of Theorem \ref{thm-convex}]
Since $\Omega$ is strictly convex, $b_0(\Omega)=1$ and $b_p(\Omega)=0$ for $p=1,2,\cdots,n$. By Lemma \ref{lem-comp}, we only need to estimate the $v_k^{(p)}$ in Lemma \ref{lem-v} for this case.

Let $\xi_1^{(p)},\cdots,\xi_{k-1}^{(p)}$ be the eigenforms for $v_1^{(p)},\cdots,v_{k-1}^{(p)}$ respectively that are orthogonal to each other. Let
\begin{equation}
S=\left\{\sum_{1\leq i_1<i_2<\cdots<i_p\leq n}a_{i_1i_2\cdots i_p}dx_{i_1}
\wedge dx_{i_2}\cdots\wedge dx_{i_p}\bigg|\sum_{1\leq i_1<i_2<\cdots<i_p\leq n}a_{i_1i_2\cdots i_p}^2=1\right\},
\end{equation}
and $H\subset \wedge^p\R^n$ be the space of $p$-forms that are orthogonal to $\xi_1^{(p)},\xi_2^{(p)},\cdots,\xi_{k-1}^{(p)}$. Then, for any $\xi\in S\cap H$, we have
\begin{equation}
v_k^{(p)}\Vol(\Omega)=v_k^{(p)}\int_M\|\xi\|^2dV_M\leq \int_{\p M}\|i_\nu \xi\|^2dV_{\p M}.
\end{equation}
Integrating the last inequality against $\xi$ over $S\cap H$, we have
\begin{equation}\label{eqn-gauss}
\begin{split}
&v_k^{(p)}\Vol(\Omega)\Vol(\sph^{C_m^p-k})\\
\leq& \int_{\p \Omega}\int_{S\cap H}\|i_\nu \xi\|^2d\xi dV_{\p \Omega}\\
=&\int_{\p \Omega}\int_{S\cap H}\|i_{\nu} \xi\|^2d\xi dV_{\p \Omega}\\
=&\int_{S\cap H}\int_{\sph^{n-1}}\frac{\|i_{X} \xi\|^2}{K}dV_{\sph^{n-1}}(X)d\xi\\
\leq&\frac{1}{\min_{\p\Omega} K}\int_{S\cap H}\int_{\sph^{n-1}}\|i_{X} \xi\|^2dV_{\sph^{n-1}}(X)d\xi\\
=&\frac{1}{\min_{\p\Omega} K}\int_{S\cap H}\int_{\sph^{n-1}}\sum_{1\leq i_2<i_3<\cdots<i_p\leq n}\left(\sum_{i_1=1}^nX_{i_1}a_{i_1i_2\cdots i_{p}}\right)^2dV_{\sph^{n-1}}(X)da\\
=&\frac{1}{\min_{\p\Omega} K}\int_{S\cap H}\int_{\sph^{n-1}}\sum_{1\leq i_2<i_3<\cdots<i_p\leq n}\sum_{i_1=1}^nX_{i_1}^2a_{i_1i_2\cdots i_{p}}^2dV_{\sph^{n-1}}(X)da\\
=&\frac{V(\sph^{n-1})}{n\min_{\p \Omega}K}\int_{S\cap H}\sum_{1\leq i_2<i_3<\cdots<i_p\leq n}\sum_{i_1=1}^na_{i_1i_2\cdots i_{p}}^2da\\
=&\frac{pV(\sph^{n-1})}{n\min_{\p \Omega}K}\int_{S\cap H}\sum_{1\leq i_1< i_2<i_3<\cdots<i_p\leq n}a_{i_1i_2\cdots i_{p}}^2da\\
=&\frac{p\Vol(\sph^{n-1})\Vol(\sph^{C_m^p-k})}{n\min_{\p \Omega}K}.
\end{split}
\end{equation}
Here we have used the co-area formula and that the Jacobian of the Gauss map is the Gaussian curvature. Note that we have also skew symmetrically extended $a_{i_1i_2\cdots i_p}$ in their indices.

When equality of \eqref{eqn-gauss} is true, we know that $K\equiv \min _{\p\Omega} K$ is a constant. This implies that $\Omega$ is a ball. Combining this and Raulot-Savo's result \eqref{eqn-eigen-ball-1} and \eqref{eqn-eigen-ball-2} give us the conclusion.

\end{proof}


\begin{thebibliography}{99}
\bibitem{BS}Belishev, M.; Sharafutdinov, V. {\it Dirichlet to Neumann operator on differential forms.} Bull. Sci. Math. 132 (2008), no. 2, 128--145.
\bibitem{Br}Brock, F. {\it An isoperimetric inequality for eigenvalues of the Stekloff problem.}  Z. Angew. Math. Mech. 81 (2001), no. 1, 69--71.
    \bibitem{CEG}Colbois, B.; El Soufi, A.; Girouard, A., {\sl Isoperimetric control of the Steklov spectrum.} J. Funct. Anal. 261 (2011), no. 5, 1384--1399.
\bibitem{E}Escobar, J., {\sl A comparison theorem for the first non-zero Steklov eigenvalue.} J. Funct. Anal. 178 (2000), no. 1, 143--155.
\bibitem{FS1} Fraser, A.; Schoen, R., {\sl
The first Steklov eigenvalue, conformal geometry, and minimal surfaces,} Adv. Math. \textbf{226} (2011), no. 5, 4011--4030.

 \bibitem{FS2} Fraser, A.; Schoen, R., {\sl Sharp eigenvalue bounds and minimal surfaces in the ball}, preprint, arXiv:1209.3789
\bibitem{GP}Girouard, A.; Polterovich, I. {\it Spectral geometry of the Steklov problem.}  arXiv:1411.6567.
\bibitem{GP1}Girouard, A.; Polterovich, I. {\it Upper bounds for Steklov eigenvalues on surfaces.} Electron. Res. Announc. Math. Sci. 19 (2012), 77--85.
\bibitem{GP2}Girouard, Alexandre; Polterovich, Iosif {\it Shape optimization for low Neumann and Steklov eigenvalues.} Math. Methods Appl. Sci. 33 (2010), no. 4, 501--516.
     \bibitem{GP3}Girouard, A.; Polterovich, I. {\it On the Hersch-Payne-Schiffer estimates for the eigenvalues of the Steklov problem.} (Russian) Funktsional. Anal. i Prilozhen. 44 (2010), no. 2, 33--47; translation in Funct. Anal. Appl. 44 (2010), no. 2, 10--117.
\bibitem{HPS} Hersch, J.; Payne, L. E.; Schiffer, M. M. {\it Some inequalities for Stekloff eigenvalues.} Arch. Rational Mech. Anal. 57 (1975), 99--114.
\bibitem{IM}Ilias, S.; Makhoul, O.{\it A Reilly inequality for the first Steklov eigenvalue.} Differential Geom. Appl. 29 (2011), no. 5, 699--708.
\bibitem{K}Kwong, K.-K. {\it Some sharp eigenvalue estimate for differential forms.} Private communication.
\bibitem{RS1}Raulot, S.; Savo, A. {\it On the spectrum of the Dirichlet-to-Neumann operator acting on forms of a Euclidean domain.} J. Geom. Phys. 77 (2014), 1--12.
\bibitem{RS2}Raulot, S.; Savo, A. {\it On the first eigenvalue of the Dirichlet-to-Neumann operator on forms.} J. Funct. Anal. 262 (2012), no. 3, 889--914.
\bibitem{RS3}Raulot, S.; Savo, A. {\it A Reilly formula and eigenvalue estimates for differential forms.} J. Geom. Anal. 21 (2011), no. 3, 620¨C640.
\bibitem{S}Schwarz, G. {\it Hodge decomposition¡ªa method for solving boundary value problems.} Lecture Notes in Mathematics, 1607. Springer-Verlag, Berlin, 1995. viii+155 pp. ISBN: 3-540-60016-7
\bibitem{Ta}Taylor, Michael E. {\it Partial differential equations II. Qualitative studies of linear equations.} Second edition. Applied Mathematical Sciences, 116. Springer, New York, 2011. xxii+614 pp. ISBN: 978-1-4419-7051-0.
\bibitem{WX}Wang, Qiaoling; Xia, Changyu {\it Sharp bounds for the first non-zero Stekloff eigenvalues.} J. Funct. Anal. 257 (2009), no. 8, 2635--2644.
\bibitem{W}Weinstock, R., {\it Inequalities for a classical eigenvalue problem.} J. Rational Mech. Anal. 3, (1954).


\end{thebibliography}
\end{document}